\newtheorem{theorem}{Theorem}
\newtheorem{lemma}[theorem]{Lemma}
\newtheorem{corollary}[theorem]{Corollary}
\theoremstyle{definition}
\newtheorem*{ack}{Acknowledgement}
\newcommand\noproof{\hfill$\Box$}
\newcommand\Bi{{\mathrm{Bi}}}
\newcommand\CBi{{\mathrm{CBi}}}
\newcommand\eps{\varepsilon}
\newcommand\la{\lambda}
\newcommand\las{\lambda_*}
\renewcommand\Pr{{\mathbb P}}
\newcommand\E{{\mathbb E}}
\newcommand\Var{{\mathrm{Var}}}
\newcommand\dto{\overset{\mathrm{d}}{\to}}
\newcommand\tX{{\widetilde X}}
\newcommand\op{o_{\mathrm{p}}}
\newcommand\Op{O_{\mathrm{p}}}
\newcommand\cF{\mathcal{F}}
\newcommand\cA{\mathcal{A}}
\newcommand\cB{\mathcal{B}}
\newcommand\bb[1]{\bigl(#1\bigr)}
\begin{document}
\title{Asymptotic normality of the size of the giant component via a random walk}
\author{B\'ela Bollob\'as%
\thanks{Department of Pure Mathematics and Mathematical Statistics,
Wilberforce Road, Cambridge CB3 0WB, UK and
Department of Mathematical Sciences, University of Memphis, Memphis TN 38152, USA.
E-mail: {\tt b.bollobas@dpmms.cam.ac.uk}.}
\thanks{Research supported in part by NSF grants CNS-0721983, CCF-0728928
and DMS-0906634, and ARO grant W911NF-06-1-0076}
\and Oliver Riordan%
\thanks{Mathematical Institute, University of Oxford, 24--29 St Giles', Oxford OX1 3LB, UK and
Department of Mathematical Sciences, University of Memphis, Memphis TN 38152, USA.
E-mail: {\tt riordan@maths.ox.ac.uk}.}}
\date{October 21, 2010; revised April 14, 2011}
\maketitle

\begin{abstract}
In this paper we give a simple new proof of a result of Pittel and Wormald concerning
the asymptotic value and (suitably rescaled) limiting distribution
of the number of vertices in the giant component of $G(n,p)$ above
the scaling window of the phase transition. Nachmias and Peres used
martingale arguments to study Karp's exploration process, obtaining
a simple proof of a weak form of this result. We
use slightly different martingale arguments to obtain a much
sharper result with little extra work.
\end{abstract}

\section{Introduction and results}

The component of a random graph containing a given vertex may
be `explored' by a step-by-step process that is by now well known,
described in detail below. A key feature of this process
is that vertices are `examined' one at a time, and tested
for edges to `new' vertices. This means that the behaviour
of the exploration is closely connected to that of a certain random walk.
In the context of random graphs, this process
was introduced by Karp~\cite{Karp} in 1990; 
slightly earlier, Martin-L\"of~\cite{ML86}
used essentially the same process in a different context,
namely the study of epidemics, where it arises even more naturally.
Somewhat later, Aldous~\cite{Aldous} introduced
a variant of the process adapted to explore {\em all} components
of a random graph; 
recently, analyzing this latter exploration with
martingale techniques related to those in~\cite{ML86},
Nachmias and Peres~\cite{NP_giant} gave a simple proof that in the weakly supercritical range,
i.e., when $p=(1+\eps)/n$ where $\eps=\eps(n)$ satisfies
$\eps\to 0$ but $\eps^3n\to \infty$,
the largest component of $G(n,p)$ contains $2\eps n+\op(\eps n)$ vertices.
(They also studied the weakly subcritical case, which we shall not discuss
further here.)

Here we shall analyze the same process more carefully, obtaining a simple
new proof of the following asymptotic normality result due to Pittel and Wormald~\cite{PWio}.
Let $\rho=\rho_\la$ denote the survival probability of the Galton--Watson
branching process in which the number of offspring of each individual has a Poisson
distribution with mean $\la$.
For $\la>1$ we may write $\rho_\la$ as the unique positive solution to
\begin{equation}\label{rdef}
 1-\rho=e^{-\la\rho}.
\end{equation}
When $\la>1$ we write $\las$ for $\la(1-\rho_\la)$; this is often known as the {\em dual
branching process parameter} to $\la$, and satisfies $\las<1$ and $\las e^{-\las}=\la e^{-\la}$.
(The corresponding Poisson branching process provides an approximation
of the random graph in the vicinity of a generic vertex outside the giant component.)

\begin{theorem}\label{th_gen}
Let $p=\la/n$ where $\la=\la(n)$ satisfies $\la=O(1)$ and $(\la-1)^3n\to\infty$
as $n\to\infty$, and
let $L_1$ denote the number of vertices in the largest component
of $G(n,p)$.
Then
\begin{equation*}
 \frac{L_1 - \rho n}{\sigma} \dto N(0,1),
\end{equation*}
where $\dto$ denotes convergence in distribution, $N(0,1)$ is the standard
normal distribution,
$\rho=\rho_\la>0$ is defined by \eqref{rdef},
and
\[
 \sigma^2 = \frac{\rho(1-\rho)}{(1-\las)^2} n.
\]
\end{theorem}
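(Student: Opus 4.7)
The plan is to run Karp's exploration from an arbitrary vertex $v$, but to work with the \emph{neutral} count rather than the active count; this converts the problem into an honest binomial CLT with a deterministic first-passage map. At step $t$ we examine one active vertex, or, if none is active, activate an arbitrary neutral vertex, so the process is well-defined for all $t \le n$. Let $S_t$ be the number of active vertices and $U_t = n - t - S_t$ the number of neutral vertices, so the component $C_v$ of $v$ equals $T := \inf\{t \ge 1: S_t = 0\}$. The engine of the proof is the following exact identity, from the principle of deferred decisions: every vertex $w \ne v$ independently survives each round with probability $1-p$, so
\[
 U_t \sim \mathrm{Bin}(n-1, (1-p)^t) \quad \text{exactly, for all } t \le n.
\]
Consequently $\E U_t \approx n e^{-\la t/n}$, which drives $\E S_t \approx n f(t/n)$ with $f(u) = 1 - u - e^{-\la u}$; this vanishes at $u = \rho$ with slope $1 - \la(1-\rho) = 1 - \las > 0$.

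At $t = \rho n$, since $(1-p)^{\rho n} \to 1-\rho$, the ordinary CLT for binomial sums gives
\[
 \frac{U_{\rho n} - n(1-\rho)}{\sqrt{n\rho(1-\rho)}} \dto N(0,1).
\]
I would convert this into a CLT for $T$ by a first-passage linearisation. Writing $g(t) := U_t - (n-t) = -S_t$, the deterministic curve $\E g(t)$ vanishes simply at $\rho n$ with slope $1 - \las$, and a local expansion gives
\[
 T - \rho n = -\frac{U_{\rho n} - n(1-\rho)}{1 - \las} + \op\!\bb{\sqrt{n}/(1-\las)}.
\]
Combined with the binomial CLT, this yields $(T - \rho n)/\sigma \dto N(0,1)$ with $\sigma^2 = \rho(1-\rho)n/(1 - \las)^2$, matching the constant in the theorem.

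To transfer the conclusion from $C_v$ to $L_1$, I would invoke uniqueness of the giant in the weakly supercritical range, so $C_v = L_1$ on $\{v \in \text{giant}\}$; the size-biasing produced by this conditioning shifts the mean by $O\!\bb{\sigma^2/(\rho n)} = o(\sigma)$, which is absorbed into the Gaussian approximation. The main obstacles I anticipate are twofold. First, the linearisation must remain uniform across the full range $(\la - 1)^3 n \to \infty$: when $\la = 1 + \eps$ with $\eps \to 0$, $1 - \las \sim 2\eps$ and $\sigma \sim \sqrt{n/\eps}$, so the deterministic drift of $g$ across the first-passage window is $(1 - \las)\sigma \sim \sqrt{n\eps^3}$, which diverges precisely under the hypothesis but only marginally; tracking this scaling carefully (and ruling out $S_t$ re-crossing $0$) is the main technical work. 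Second, one must show that conditioning on $v \in $ giant (equivalently, on $S_t$ surviving the initial $o(n)$ window) does not distort the Gaussian fluctuations of $U_{\rho n}$; intuitively this conditioning is Markovian and is ``forgotten'' well before time $\rho n$, but a quantitative argument is needed uniformly in the supercritical range.
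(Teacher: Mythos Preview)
Your route is genuinely different from the paper's in two respects. First, you exploit the exact identity $U_t\sim\mathrm{Bin}(n-1,(1-p)^t)$ for the unseen count, which replaces the paper's martingale CLT (its Lemma on the asymptotic normality of $S_{t_1}$) by the ordinary de Moivre--Laplace theorem; this is a real simplification of that step. Second, you start from a single vertex $v$ and study $T=|C_v|$, whereas the paper runs the exploration through \emph{all} components in succession and carries the component counter $C_t$ along. The paper's choice means it never has to condition: it shows that the number $Z$ of components completed before time $t_0$ and the time $T_0$ spent on them are both negligible, so the error $|X_t-\tX_t|\le ptC_t$ stays $\op(\sigma_0)$ and the giant is located as the unique long excursion of $(X_t)$ with no bias introduced. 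Your route must instead handle the conditioning on $\{v\in\text{giant}\}$, which you correctly flag as the second obstacle.

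Two technical points deserve attention. (i) The binomial identity is exact only for $t\le T$; once you restart by activating a neutral vertex, $U_t$ is no longer $\mathrm{Bin}(n-1,(1-p)^t)$, so your ``for all $t\le n$'' is not right. More importantly, the displayed conclusion $(T-\rho n)/\sigma\dto N(0,1)$ is false unconditionally: with probability $\approx 1-\rho$ the vertex $v$ misses the giant and $T=o(\rho n)$. What you actually need is the conditional law of $T$ given survival past some threshold $t_0$, and then an argument that this conditioning---which involves only the coins $(\xi_{i,w})_{i\le t_0}$---does not perturb the Gaussian fluctuation of $U_{\rho n}$. Your Markovian-forgetting heuristic is sound and can be pushed through (condition on $\cF_{t_0}$, note $U_{\rho n}\mid\cF_{t_0}\sim\mathrm{Bin}(U_{t_0},(1-p)^{\rho n-t_0})$ on $\{T>t_0\}$, and check $U_{t_0}=n+\op(\sqrt{n})$ there), but this is exactly the work the paper's multi-component device is engineered to avoid. (ii) Minor: $f(u)=1-u-e^{-\la u}$ has $f'(\rho)=-1+\la(1-\rho)=\las-1<0$, not $1-\las$; this sign slip is harmless since only $(1-\las)^2$ enters $\sigma^2$. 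Your size-biasing estimate $\sigma^2/(\rho n)=o(\sigma)$ is correct---it is equivalent to $(\la-1)^3n\to\infty$---and the transfer from $T$ to $L_1$ via $\Pr(T\in I)=\E[\mathbf 1_{L_1\in I}\,L_1/n]+o(\rho)$ does go through once the conditional CLT for $T$ is established.
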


The special case of this result in which $\la$ is constant 
goes back to Stepanov~\cite{Stepanov_conn} (see also Pittel~\cite{Pittel_census});
the form above is due to Pittel and Wormald~\cite{PWio}, who proved much
more, including asymptotic joint normality of the sizes of the largest
component and of its 2-core.

Specializing to the barely supercritical case, the
formulae above simplify considerably. Indeed,
it is easy to check that if $\la=1+\eps$ and
$\eps\to 0$, then $\rho_\la=2\eps+O(\eps^2)$, and $\las=1-\eps+O(\eps^2)$.
Thus Theorem~\ref{th_gen} has the following corollary.

\begin{corollary}\label{th1}
Let $\eps=\eps(n)$ satisfy $\eps\to 0$ and $\eps^3 n\to\infty$,
and let $L_1$ denote the number of vertices in the largest component
of $G(n,(1+\eps)/n)$.
Then
\begin{equation}\label{aim}
 \frac{L_1 - \rho n}{\sqrt{2\eps^{-1}n}} \dto N(0,1),
\end{equation}
where $\rho>0$ is defined by \eqref{rdef} with $\la=1+\eps$.
\noproof
\end{corollary}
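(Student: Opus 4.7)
The plan is to derive Corollary~\ref{th1} directly from Theorem~\ref{th_gen} by substituting $\la = 1+\eps$, verifying in the process the two asymptotic expansions $\rho_\la = 2\eps + O(\eps^2)$ and $\las = 1 - \eps + O(\eps^2)$ asserted in the paragraph preceding the corollary. The hypotheses of Theorem~\ref{th_gen} are clearly met: $\la = 1+\eps = O(1)$ and $(\la-1)^3 n = \eps^3 n \to \infty$ by assumption. Thus $(L_1 - \rho n)/\sigma \dto N(0,1)$ with $\sigma^2 = \rho(1-\rho)n/(1-\las)^2$.

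Given the two expansions, substituting into this formula gives
\[
\sigma^2 = \frac{(2\eps + O(\eps^2))(1 + O(\eps))}{\eps^2(1 + O(\eps))^2}\,n = \frac{2n}{\eps}\bigl(1 + O(\eps)\bigr),
\]
so the deterministic ratio $\sigma/\sqrt{2\eps^{-1}n}$ tends to $1$. Slutsky's theorem then converts the conclusion of Theorem~\ref{th_gen} into the desired \eqref{aim}.

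The expansions themselves are routine. For $\rho$, expanding $e^{-\la\rho}$ in $1 - \rho = e^{-\la\rho}$ gives $(\la-1)\rho = \la^2\rho^2/2 + O((\la\rho)^3)$; since $\rho \to 0$ as $\la \to 1$, dividing by $\rho$ and inverting yields $\rho = 2(\la-1)/\la^2 + O((\la-1)^2)$, which with $\la = 1+\eps$ is $\rho = 2\eps + O(\eps^2)$. Substituting back, $\las = \la(1-\rho) = (1+\eps)(1 - 2\eps + O(\eps^2)) = 1 - \eps + O(\eps^2)$.

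There is no substantive obstacle here: the corollary is immediate from Theorem~\ref{th_gen} together with these routine Taylor-expansion calculations and a single application of Slutsky's theorem. The only thing one has to be even mildly careful about is tracking the error terms at the right order to ensure that $\sigma^2/(2\eps^{-1}n) = 1 + O(\eps)$, which is just enough to absorb the change of normalization into the limiting distribution.
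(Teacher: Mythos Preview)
Your proof is correct and follows exactly the approach the paper intends: the paper states the two expansions $\rho_\la=2\eps+O(\eps^2)$ and $\las=1-\eps+O(\eps^2)$ in the paragraph before the corollary and then presents Corollary~\ref{th1} without proof, leaving precisely the substitution and Slutsky step you have written out.
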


Under the conditions of Corollary~\ref{th1}
we have $\rho\sim 2\eps$, while the standard deviation $\sqrt{2\eps^{-1}n}$
is $o(\eps n)$, so Corollary~\ref{th1} implies in particular the result
of Nachmias and Peres~\cite{NP_giant} mentioned earlier.

\section{The proof}

We consider the component exploration process as in~\cite{NP_giant},
itself based on those of Karp~\cite{Karp}, Martin-L\"of~\cite{ML86} and Aldous~\cite{Aldous},
although we shall use slightly different terminology and initial conditions.
At each step, every vertex will
have one of three states, {\em active}, {\em explored}, or {\em unseen}.
The exploration will take place in $n$ steps, at times $t=1,\ldots,n$,
starting from the initial state at time $0$, when every vertex is unseen.

Fix an order on the vertices. At step $1\le t\le n$ (i.e., going from time $t-1$ to time $t$)
let $v_t$ be the first active vertex, if there are any;
otherwise $v_t$ is the first unseen vertex. In the latter case
we say that we `start a new component' at step $t$. Having defined $v_t$,
reveal all edges from $v_t$ to (other) unseen vertices; let $\eta_t$
be the number of such edges, and label the corresponding
neighbours of $v_t$ as active; label $v_t$ itself as explored.
After $t$ steps of the process, exactly $t$ vertices have been explored.
We write $A_t$ and $U_t$ for the numbers of active
and unseen vertices after $0\le t\le n$ steps, so $U_t=n-t-A_t$,
$A_0=0$ and $U_0=n$.

After $n$ steps,
it is very easy to see that the process has revealed a spanning forest
in $G$, having first revealed a spanning tree of one component, then a spanning
tree of another component (if there is more than one), and so on.

Write $C_t$ for the number of components started by time $t$,
and set $X_t=A_t-C_t$. We claim that
\begin{equation}\label{claim}
 X_t = A_t-C_t = \sum_{i=1}^t (\eta_i-1).
\end{equation}
Indeed, if in step $t$ we do not start a new component, then we explore an active
vertex and then change $\eta_t$ vertices from unseen to active,
so $A_t-A_{t-1}=\eta_t-1$ and $C_t=C_{t-1}$. If we do start a new component, which happens
if and only if $A_{t-1}=0$, then we explore an unseen vertex, so $A_t-A_{t-1}=A_t=\eta_t$
and $C_t-C_{t-1}=1$.
This establishes \eqref{claim}.

Let $0=t_0<t_1<t_2<\cdots<t_k=n$ enumerate $\{t:A_t=0\}$, i.e., the set of times
at which there are no active vertices. We start exploring the $i$th component
at time $t_{i-1}+1$ and finish at time $t_i$, so
\begin{equation}\label{L1}
 L_1 = \max\{t_i-t_{i-1} : 1\le i\le k\}.
\end{equation}
Since $C_t=i$ for $t_{i-1}<t\le t_i$, recalling that $X_t=A_t-C_t$ we have
\begin{equation}\label{ti}
 t_i = \inf\{t:X_t =-i\}.
\end{equation}
Writing $c(G)$ for the number of components of $G=G(n,p)$, note that $X_n=-c(G)$,
and that $X_t$ may decrease by at most one at each step, so the infimum is defined
for all $1\le i\le c(G)$.

Let $\cF_t$ denote the sigma-field generated by
$\eta_1,\ldots,\eta_t$; in other words, $\cF_t$ is the (finite, of
course) sigma-field generated by all information revealed by step $t$.
Set $U_t'=U_t$ if $A_t>0$ and $U_t'=U_t-1$
otherwise. Then $U_t'$ is the number of edges tested at step $t+1$.
Hence, given $\cF_t$, the random variable $\eta_{t+1}$ has a binomial distribution
with parameters $U_t'$ and $p$:
\[
 \Pr( \eta_{t+1} =k \mid \cF_t) = \binom{U_t'}{k}p^k(1-p)^{U_t'-k}.
\]
If we know the sequence $(\eta_t)$, then we know the entire outcome
of the process, and in particular $L_1$. More precisely, we can use \eqref{claim} 
to find $(X_t)$, then \eqref{ti} to find the $t_i$ (and thus $(C_t)$, $(A_t)$ and $(U_t)$),
and finally \eqref{L1} gives us $L_1$.

So far we have been following (with minor modifications) the definitions
and initial analysis in~\cite{NP_giant}. But now our analysis takes a different route.

Let us write
$D_t$ for the expectation of $\eta_t-1$ given $\cF_{t-1}$, noting that
$D_t$ is random, and satisfies
\[
 D_{t+1} = \E( \eta_{t+1}-1 \mid \cF_t ) =  p U_t'-1.
\]
Recalling that $U_t=n-t-A_t=n-t-X_t-C_t$, and noting that $U_t'=U_t-(C_{t+1}-C_t)$,
this gives
\begin{equation}\label{dt}
 D_{t+1} = p(n-t-X_t-C_{t+1})-1.
\end{equation}

Our next aim is to approximate the process $(X_t)$ that we wish to study
by a simpler process $(\tX_t)$, consisting of a deterministic term plus a term
closely related to a martingale.
Let $\Delta_t = \eta_t-1-D_t$, so $\E(\Delta_t\mid \cF_{t-1})=0$ by the definition of $D_t$.
From \eqref{claim}, \eqref{dt} and $\eta_{t+1}-1=D_{t+1}+\Delta_{t+1}$ we obtain the recurrence
\begin{equation}\label{Xrec}
 X_{t+1} = (1-p) X_t + \Delta_{t+1} + p(n-t)-1-p C_{t+1}.
\end{equation}
Let
\[
 x_t = n-t-n(1-p)^t,
\]
so $x_0=0$ and
\begin{equation}\label{tXr}
x_{t+1} = (1-p)x_t + p(n-t) -1.
\end{equation}
Subtracting \eqref{tXr} form \eqref{Xrec} we see that
\[
 X_{t+1}-x_{t+1} = (1-p) (X_t-x_t) + \Delta_{t+1} - pC_{t+1},
\]
whence
\begin{equation}\label{Xx}
 X_t-x_t = \sum_{i=1}^t (1-p)^{t-i}(\Delta_i-p C_i).
\end{equation}
With this in mind, we define our approximating process by
\begin{equation}\label{tdef}
 \tX_t = x_t + \sum_{i=1}^t (1-p)^{t-i}\Delta_i.
\end{equation}

\begin{lemma}\label{Wc}
For any $p>0$ and any $1\le t\le n$ we have
\[
 | X_t-\tX_t | \le pt C_t.
\]
\end{lemma}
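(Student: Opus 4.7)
The proof is essentially a direct comparison using the two formulas already in hand. My plan is as follows.

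First, I would read off the difference $X_t - \tX_t$ by subtracting the definition \eqref{tdef} of $\tX_t$ from the identity \eqref{Xx} for $X_t - x_t$. The $x_t$ terms cancel, as do the contributions $\sum_{i=1}^t (1-p)^{t-i}\Delta_i$, leaving
\[
 X_t - \tX_t = -p\sum_{i=1}^t (1-p)^{t-i} C_i.
\]
So the claim reduces to bounding $\sum_{i=1}^t (1-p)^{t-i} C_i$ by $t C_t$.

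Second, I would use two elementary monotonicity facts. The sequence $(C_i)_{i\ge 0}$ is nondecreasing, since $C_i$ counts the number of components started by step $i$ and this count never goes down; hence $C_i\le C_t$ for every $1\le i\le t$. Also, since $0<p\le 1$, each factor $(1-p)^{t-i}$ lies in $[0,1]$. Plugging both of these into the displayed identity gives
\[
 |X_t-\tX_t| \;=\; p\sum_{i=1}^t (1-p)^{t-i} C_i \;\le\; pC_t \sum_{i=1}^t 1 \;=\; pt C_t,
\]
which is exactly the stated bound.

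There is essentially no obstacle here: the work was done in setting up \eqref{Xx} and \eqref{tdef}, and the lemma is just the observation that the discrepancy between the true process and the approximating one is the convolution of $(1-p)^{t-i}$ with $-pC_i$, and that this is controlled crudely by $ptC_t$. The only thing worth flagging is that the bound is wasteful — one could replace $t$ by $\min\{t,1/p\}$ using the geometric sum — but the stated form is all that is needed later, presumably because $C_t$ will itself be shown to be small compared to the dominant fluctuation, so the cruder bound suffices.
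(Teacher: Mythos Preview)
Your proof is correct and follows exactly the same route as the paper: subtract \eqref{tdef} from \eqref{Xx} to get $X_t-\tX_t=-p\sum_{i=1}^t (1-p)^{t-i}C_i$, then bound each term by $pC_t$ using $(1-p)^{t-i}\le 1$ and the monotonicity of $C_i$. The paper's version is terser but identical in substance; your remark about the geometric sum giving a sharper bound is a fair observation, though as you correctly surmise, the cruder bound is all that is used downstream.
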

\begin{proof}
From \eqref{Xx} and \eqref{tdef} we have
\[ 
 X_t-\tX_t = -\sum_{i=1}^t (1-p)^{t-i}p C_i.
\]
The result follows immediately since there are $t$ terms in the sum,
each bounded by $p C_t$.
\end{proof}

Let
\[ 
 S_t=\sum_{i=1}^t (1-p)^{-i}\Delta_i,
\]
so $(S_t)$ is a martingale, and
\begin{equation}\label{td2}
 \tX_t = x_t + (1-p)^t S_t.
\end{equation}
As we shall see below, it is easy to obtain very precise results
about the distribution of $(\tX_t)$; before turning to the details,
let us indicate in rather vague terms why this should be the case.

The variance of each $\Delta_i$ is $O(1)$, so $S_t$ and hence $(1-p)^t
S_t$ have variance $O(t)$ and size $\Op(\sqrt t)$.  It is true that
the distribution of $\Delta_t$ depends on earlier values of $X_i$ in a
way that is hard to evaluate exactly, but the dependence is weak: the
conditional variance of $\Delta_t$ is simply $p(1-p)U_{t-1}'$, so if
we can bound the earlier $X_i$ within an additive error of $o(n)$,
then we obtain a bound on the variance of $\Delta_t$ accurate to
within a factor $1+o(1)$. This gives only a $\op(\sqrt t)$ additive
error in the martingale term, which is negligible compared to the
random variation. (It will turn out that we hit the giant component
before seeing many other components, so additional $ptC_t$ error
from Lemma~\ref{Wc} will be negligible.)
This strongly suggests
that given that Theorem~\ref{th_gen} is true, there should be a simple
proof based on the analysis of $(\tX_t)$. As we shall see, this is indeed
the case.

From now on we assume that $p=\la/n$, where $\la=\la(n)>1$ is bounded.
More explicitly, we assume $\la<M$ for some constant $M$.
Often, we write $\la=1+\eps$; we assume also that $\eps^3 n\to\infty$.

For the moment, we study $(\tX_t)$.
Let us first start with a standard
observation; the second part is a special
case of Doob's maximal inequality~\cite[Ch. III, Theorem 2.1]{Doob}.

\begin{lemma}\label{lmax}
Let $(Z_t)_0^\infty$ be a discrete-time martingale with
filtration $(\cF_t)$ and mean $Z_0=0$. Write
$I_t$ for the increment $Z_t-Z_{t-1}$.
Then
\begin{equation}\label{vzt}
 \Var(Z_t) = \sum_{i=1}^t \Var(I_i) = \sum_{i=1}^t \E\bb{\Var(I_i\mid \cF_{i-1})},
\end{equation}
and for any $M\ge 0$,
\[
 \Pr( \max_{i\le t}|Z_i| \ge M) \le \Var(Z_t)/M^2.
\]
\end{lemma}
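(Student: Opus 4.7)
The lemma is entirely classical, so my plan is just to recall the two standard arguments and note which ingredient does the real work.

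For the variance identity, I would start by establishing the orthogonality of martingale increments. Since $Z_0=0$ and $\E(I_j\mid\cF_{j-1})=0$, the tower property gives $\E(I_iI_j)=\E(I_i\E(I_j\mid\cF_{j-1}))=0$ for $i<j$, and also $\E(I_i)=0$ for every $i$. Expanding $Z_t=\sum_{i=1}^t I_i$ then yields
\[
 \Var(Z_t)=\E(Z_t^2)=\sum_{i=1}^t\E(I_i^2)=\sum_{i=1}^t\Var(I_i),
\]
which is the first equality in \eqref{vzt}. For the second equality, one more application of the tower property together with $\E(I_i\mid\cF_{i-1})=0$ gives $\Var(I_i)=\E(I_i^2)=\E(\E(I_i^2\mid\cF_{i-1}))=\E(\Var(I_i\mid\cF_{i-1}))$.

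For the maximal inequality, my preferred route is the direct ``first passage'' argument, which avoids quoting the submartingale version of Doob. Let $\tau=\min\{i\le t:|Z_i|\ge M\}$, with the convention $\tau=t$ (say) if no such $i$ exists, and set $A_k=\{\tau=k,\ k\le t,\ |Z_k|\ge M\}$ for $1\le k\le t$. The events $A_k$ are disjoint, their union is $\{\max_{i\le t}|Z_i|\ge M\}$, and $A_k\in\cF_k$. Decomposing $Z_t=Z_k+(Z_t-Z_k)$ on $A_k$, using $\E((Z_t-Z_k)\mid\cF_k)=0$ to kill the cross term, and noting that the remaining square $\E((Z_t-Z_k)^2\mathbf{1}_{A_k})$ is nonnegative, I get
\[
 \E(Z_t^2\mathbf{1}_{A_k})\ge \E(Z_k^2\mathbf{1}_{A_k})\ge M^2\Pr(A_k).
\]
Summing over $k$ gives $\Var(Z_t)=\E(Z_t^2)\ge M^2\Pr(\max_{i\le t}|Z_i|\ge M)$, which is the desired inequality.

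Neither step is a real obstacle: the only potentially delicate point is the disjoint-event decomposition and the observation that $A_k\in\cF_k$ so that the cross term vanishes. Everything else is algebra and the martingale property.
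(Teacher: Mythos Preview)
Your proof is correct. The variance identity is handled exactly as in the paper, via orthogonality of increments and the tower property. For the maximal inequality the paper first simply cites Doob and then, as an alternative, stops the martingale at the first time $T$ with $|Z_T|\ge M$, compares $\Var(Z_t')\le\Var(Z_t)$ via \eqref{vzt}, and applies Chebyshev to $Z_t'$; your first-passage decomposition over the events $A_k=\{\tau=k\}$ is the standard direct proof of Doob's $L^2$ inequality itself, so the two arguments are really the same idea unwound in slightly different language. Your version is marginally more self-contained (it does not reuse \eqref{vzt}), while the paper's stopped-martingale phrasing makes the reduction to Chebyshev explicit; neither buys anything substantive over the other.
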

\begin{proof}
For the first statement, observe that $\E I_i=0$ for all $i$ and $\E Z_t=0$,
while for $i<j$ we have $\E(I_iI_j) = \E( \E(I_iI_j\mid \cF_{j-1})) = \E(0)=0$.
Hence $\Var(Z_t)=\E Z_t^2 =\E\bigl(\sum_1^t I_i\bigr)^2 = \sum_i \E I_i^2 = \sum_i\Var(I_i)$. Also,
$\E(\Var(I_i\mid \cF_{i-1})) = \E(\E(I_i^2\mid \cF_{i-1}))= \E I_i^2$,
proving \eqref{vzt}.

For the second statement, apply Doob's maximal inequality.
Alternatively, simply modify the martingale if $|Z_i|\ge M$
holds for any $i$: let $T$ be the (random) first such $i$,
or $T=t$ if there is no such $i$, 
and set $Z_j'=Z_j$ for $j\le T$ and $Z_j'=Z_T$ for $j> T$.
Since $T$ is a stopping time, the conditional distribution
of $I_i'=Z_i'-Z_{i-1}'$ given $\cF_{i-1}$ is either
the same as that of $I_i$, or zero, so the conditional variances
of the $I_i'$ are at most those of the $I_i$.
Hence, by \eqref{vzt}, $\Var(Z_t')\le \Var(Z_t)$.
Since $\max_{i\le t}|Z_i|\ge M$ if and only if $|Z_t'|\ge M$,
applying Chebyshev's inequality gives the result.
\end{proof}

Let us write $\CBi(m,p)$ for the {\em centered binomial distribution}
obtained by subtracting $mp$ from a random variable with binomial 
distribution $\Bi(m,p)$. Note that the variance of this distribution
is $mp(1-p)$. The conditional distribution of $\Delta_t$ given
$\cF_{t-1}$ is exactly that of a centered binomial
$\CBi(U_{t-1}',p)$. (Previously, we first subtracted one, and then centered,
but of course this is the same as centering directly.)
It follows that the differences $I_i=S_i-S_{i-1}=(1-p)^{-i}\Delta_i$ satisfy
\begin{equation}\label{var}
 \Var(I_i\mid \cF_{i-1}) = (1-p)^{-2i} U_{i-1}'p(1-p),
\end{equation}
so
\[
 \Var(I_i\mid \cF_{i-1}) \le (1-p)^{-2n} np \le (1-M/n)^{-2n}M=O(1).
\]
For any (deterministic) function $t=t(n)$, Lemma~\ref{lmax} thus gives
\begin{equation}\label{smax}
 \sup_{i\le t}|S_i|=\Op(\sqrt{t}).
\end{equation}

Let $f(t)=f_n(t)=n-t-ne^{-pt}$ be the continuous-time form of the
idealized trajectory of $(\tX_t)$ (and hence of $(X_t)$).
It is easy to check that $|f(t)-x_t|=O(1)$,
uniformly in $p\le M/n$ and $0\le t\le n$;
our next lemma shows that $(\tX_t)$ remains close to $f_n(t)$.
\begin{lemma}\label{l2}
For any  $1\le t=t(n)\le n$ we have
\begin{equation}\nonumber
 \sup_{i\le t} |\tX_t-f_n(t)| =\Op(\sqrt{t}).
\end{equation}
\end{lemma}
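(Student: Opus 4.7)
The statement is an immediate combination of the preceding two observations, so the plan is very short. First I would expand $\tX_i - f_n(i)$ using the decomposition \eqref{td2}, writing
\[
 \tX_i - f_n(i) = \bb{x_i - f_n(i)} + (1-p)^i S_i,
\]
and then bound each piece separately, taking a sup over $i \le t$ at the end.

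For the first piece, the text just above the lemma records that $|f_n(i) - x_i| = O(1)$ uniformly in $i \le n$ and in $p \le M/n$, so this contributes at most $O(1)$ to $\sup_{i\le t}|\tX_i - f_n(i)|$. Since $t \ge 1$, this is absorbed into $\Op(\sqrt t)$.

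For the martingale piece, I would use $(1-p)^i \le 1$ to bound $|(1-p)^i S_i| \le |S_i|$, and then invoke the estimate \eqref{smax}, namely $\sup_{i \le t}|S_i| = \Op(\sqrt t)$, which follows from Lemma~\ref{lmax} together with the uniform bound on the conditional variances $\Var(I_i \mid \cF_{i-1}) = O(1)$ from \eqref{var}. Putting the two bounds together with the triangle inequality yields $\sup_{i\le t}|\tX_i - f_n(i)| \le O(1) + \Op(\sqrt t) = \Op(\sqrt t)$.

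There is no real obstacle: the lemma is really just a repackaging of \eqref{td2}, the deterministic approximation $|x_i - f_n(i)| = O(1)$, and the Doob-type bound \eqref{smax}. The only minor thing to be careful about is verifying that $|f_n(i) - x_i| = O(1)$ uniformly, which is a straightforward calculation from $f_n(i) - x_i = ne^{-pi} - n(1-p)^i$ together with $p = O(1/n)$ and the inequality $|e^{-pi} - (1-p)^i| \le p$ on $[0,n]$ (coming from $0 \le \log(1/(1-p)) - p = O(p^2)$ summed $i \le n$ times and multiplied by $(1-p)^i$).
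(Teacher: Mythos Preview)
Your argument is correct and is exactly the route the paper takes: the paper's proof is the single line ``Immediate from \eqref{smax}, \eqref{td2} and $|f_n(t)-x_t|=O(1)$,'' which is precisely your decomposition via \eqref{td2} into the deterministic piece $x_i-f_n(i)=O(1)$ and the martingale piece bounded by \eqref{smax}. Your extra verification of $|f_n(i)-x_i|=O(1)$ is a welcome elaboration of what the paper asserts without proof just before the lemma.
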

\begin{proof}
Immediate from \eqref{smax}, \eqref{td2}  and $|f_n(t)-x_t|=O(1)$.
\end{proof}
Together, Lemmas~\ref{Wc} and~\ref{l2} show that $(X_t)$ remains
close to the idealized trajectory $f(t)$, as long as $C_t$ is not too large.
As in~\cite{NP_giant}, the basic idea is now to consider
the solution $t_1=\rho n$ to $f(t_1)=0$, and choose
a suitable $t_0$. We shall show that in the interval $[t_0,t_1-t_0]$
the function $f(t)$ is far enough away from zero that $X_t$ remains
positive, so no new component is started in this interval. 
Then we consider more precisely the time when $X_t$ crosses
below its previous minimum level and use \eqref{ti} to obtain Theorem~\ref{th_gen}.

We start by examining $f$.
Note that
\begin{equation}\label{ft}
 f'(t) = -1+npe^{-pt} = p(n-t-f(t))-1,
\end{equation}
and that $f''(t) = -np^2e^{-pt}$ is negative and uniformly bounded by $M^2/n$.
Since $f'(0)=np-1=\eps$,
it follows that if $t\le \eps n/(2M^2)$, then $f'(t)\ge \eps/2$
and, integrating, that
\begin{equation}\label{fte}
 f(t)\ge \eps t/2.\
\end{equation}

From now on let us pick a function $\omega=\omega(n)$ tending to infinity slowly,
in particular with $\omega^6=o(\eps^3 n)$.
Set
\[ 
 \sigma_0=\sqrt{\eps n}
\]
and
\[
 t_0=\omega\sigma_0/\eps,
\]
ignoring, as usual, the irrelevant rounding to integers. Note for later that $t_0=o(\eps n)$.

\begin{lemma}\label{l1}
Let $Z=-\inf\{X_t:t\le t_0\}$ denote the number of components completely explored
by time $t_0$, and let $T_0=\inf\{t:X_t=-Z\}$ be the time at which
we finish exploring the last such component.
Then $Z\le \sigma_0/\omega$ and $T_0\le \sigma_0/(\eps\omega)$ hold whp.
\end{lemma}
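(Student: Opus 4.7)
The plan is to combine Lemmas~\ref{Wc} and~\ref{l2} with a stopping-time argument needed to break the circular dependence in Lemma~\ref{Wc}, whose bound on $|X_t-\tX_t|$ involves $C_t$, which itself tracks the quantity we are trying to control.

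Write $m=\sigma_0/\omega$. The hypothesis $\omega^6=o(\eps^3 n)$ (using $\eps^3 n\le(\eps n)^3$) gives $\omega^2=o(\eps n)$, from which I deduce $pt_0=\la\omega/\sqrt{\eps n}=o(1)$, $m\to\infty$, $t_0=o(\eps n)$ (so that \eqref{fte} applies on $[0,t_0]$), $\sqrt{t_0}/m=\omega^{3/2}/(\eps^3 n)^{1/4}=o(1)$, and consequently $pt_0(m+1)=O(1)=o(m)$. These are the only arithmetic inputs needed.

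By Lemma~\ref{l2} (via Doob from Lemma~\ref{lmax} applied to $S_t$ with $\Var(S_{t_0})=O(t_0)$) together with $|f(t)-x_t|=O(1)$, the event
\[
 \cA=\{\sup_{t\le t_0}|\tX_t-f(t)|\le m/4\}
\]
satisfies $\Pr(\cA)\ge 1-O(t_0/m^2)=1-o(1)$. Introduce the stopping time $\tau=\inf\{t:-\inf_{s\le t}X_s\ge m+1\}$, so that $C_t\le m+1$ for all $t<\tau$. On $\cA$ and for $t\le\min(t_0,\tau-1)$, Lemma~\ref{Wc} and $f(t)\ge 0$ give
\[
 X_t\ge f(t)-m/4-pt_0(m+1)\ge -m/4-o(m)>-m.
\]
Since $-\inf_{s\le t}X_s$ can rise by at most $1$ per step, first reaching $m+1$ at $\tau$ forces $X_{\tau-1}=-m$, contradicting the above unless $\tau>t_0$. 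Hence $Z\le m=\sigma_0/\omega$ whp.

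For $T_0$, I stay on the same event $\cA$, where the first claim gives $Z\le m$ and therefore $C_t\le m+1$ for every $t\le t_0$. Setting $t_1=\sigma_0/(\eps\omega)$, estimate \eqref{fte} yields $f(t)\ge \eps t_1/2=m/2$ for $t\in[t_1,t_0]$, so the same chain of inequalities gives
\[
 X_t\ge m/2-m/4-pt_0(m+1)=m/4-o(m)>0\ge -Z
\]
on $(t_1,t_0]$. Thus the minimum value $-Z$ of $X$ on $[0,t_0]$ is not attained in $(t_1,t_0]$, giving $T_0\le t_1=\sigma_0/(\eps\omega)$ whp. The only genuine obstacle is the self-reference in Lemma~\ref{Wc}; once broken by the stopping time $\tau$, the rest reduces to the elementary estimates collected in the first paragraph.
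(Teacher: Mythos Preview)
Your proof is correct. The overall strategy---control $\tX_t-f(t)$ via Lemma~\ref{l2}, then transfer to $X_t$ via Lemma~\ref{Wc}, using \eqref{fte} to locate $T_0$---is the same as the paper's, but you break the circular dependence in Lemma~\ref{Wc} differently.

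The paper avoids your stopping time $\tau$ by applying Lemma~\ref{Wc} at the single time $T_0$, where $C_{T_0}=Z$ exactly. Since $pt_0\le 1/2$ for large $n$, this yields $|X_{T_0}-\tX_{T_0}|\le pT_0 Z\le Z/2$, whence $\tX_{T_0}\le -Z/2$; combined with $f(T_0)\ge 0$ and the event $\{|\tX_{T_0}-f(T_0)|<k/2\}$ this forces $Z<k$ directly. For $T_0$, the paper then argues with $\tX$ rather than $X$: on the interval $[\sigma_0/(\eps\omega),t_0]$ one has $f\ge k/2$, so $\tX_t>0$ there, while $\tX_{T_0}\le -Z/2\le 0$ rules out $T_0$ lying in that interval. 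The ``half-absorption'' trick ($pt_0\le 1/2$) is what makes the paper's argument slightly shorter; your stopping-time approach is more mechanical but equally valid, and would generalize more readily to situations where no such convenient factor is available.

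One small quibble: the line ``first reaching $m+1$ at $\tau$ forces $X_{\tau-1}=-m$'' tacitly treats $m$ as an integer; strictly you get $X_{\tau-1}=-\lceil m\rceil$. This is harmless (and the paper itself ignores such rounding), since your actual bound $X_t\ge -m/4-o(m)>-m$ already contradicts $-\inf_{s\le\tau-1}X_s$ being within $1$ of $m+1$.
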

Considering the initial trajectory of the process $(X_t)$, it is not hard
to check that in fact $Z=\Op(\eps^{-1})$ and $T_0=\Op(\eps^{-2})$,
but the weaker bounds above suffice.
\begin{proof}
Let $k=\sigma_0/\omega$. Note that by choice of $\omega$ we have
$k/\sqrt{t_0}\to\infty$.
Let $\cA$ denote the event that $\sup_{t\le t_0}|\tX_t-f(t)| < k/2$. Then by Lemma~\ref{l2},
$\cA$ holds whp.

At time $T_0$ we have $X_{T_0}=-Z$.
Noting that $pt_0=o(1)$, we have $pt_0\le 1/2$ if $n$ is large enough,
which we assume from now on.  Since $T_0\le t_0$ by definition,
it follows that $pT_0\le 1/2$.
But then Lemma~\ref{Wc} gives
\[
 |X_{T_0}-\tX_{T_0}|\le pT_0C_{T_0}\le Z/2,
\]
and thus $\tX_{T_0}\le -Z/2$. Since $f(t)\ge 0$ for $t\le t_0<\rho n$, this gives
$|\tX_{T_0}-f(T_0)|\ge Z/2$. Hence, whenever $\cA$ holds, we have $Z\le k$,
and the first statement follows.

Turning to second statement, recall from \eqref{fte} that $f(t)\ge \eps t/2$ for $t\le t_0=o(\eps n)$.
Consider the interval $I=[\sigma_0/(\eps\omega),t_0]$. In this interval
we have $f(t)\ge \sigma_0/(2\omega)=k/2$, so if $\cA$ holds
then $\tX_t> 0$ for all $t\in I$.
As shown above, we have $\tX_{T_0}\le -Z/2\le 0$, so whenever $\cA$ holds then $T_0\notin I$.
Since $T_0\le t_0$ by definition, this completes the proof.
\end{proof}

Let $T_1=\inf\{t:X_t=-Z-1\}$. Then by the properties of the exploration process,
there is a component with $T_1-T_0$ vertices; we aim to show that
this component has size close to the anticipated size of the giant component.

Since $np=O(1)$, by Lemmas~\ref{Wc} and \ref{l1} we have that
\begin{equation}\label{XXtclose}
  \sup_{t\le T_1} |X_t-\tX_t| \le \sigma_0/\sqrt{\omega}
\end{equation}
holds whp.

Let $t_1=\rho n$, noting that $t_1\sim 2\eps n$ if $\eps\to 0$, and that
$t_1$ is the unique positive solution to $f(t)=0$.
Let $t_1^-=t_1-t_0$ and $t_1^+=t_1+t_0$. Note that $t_1^+=O(\eps n)=O(\sigma_0^2)$.
From \eqref{XXtclose} and Lemma~\ref{l2} we have that
\begin{equation}\label{close}
 \sup_{t\le \min\{T_1,t_1^+\}} |X_t-f(t)| \le \sqrt{\omega}\sigma_0
\end{equation}
holds whp.

Let $a=-f'(t_1)$, so from \eqref{ft} and the definition of $t_1$ we have
\[
 a=-f'(t_1)=1-p(n-t_1) = 1-\la(1-\rho) = 1-\las,
\]
where $\las$ is the dual branching process parameter to $\la$.
In particular,  $a=\Theta(\eps)$.
Since $f(t_1)=0$ and $f''(t)$ is uniformly $O(1/n)$,
recalling that $t_0=o(\eps n)$ 
it follows easily that $f(t_1^-)$ and $f(t_1^+)$ are
both of order $\eps t_0 = \omega\sigma_0$. To be concrete, if
$n$ is large enough, then we certainly have
\[
 f(t_1^-) \ge 10\sqrt{\omega}\sigma_0 \hbox{\quad and\quad} f(t_1^+)\le -10\sqrt{\omega}\sigma_0,
\]
say.
Since $f(t_0)\ge \eps t_0/2\ge 10\sqrt{\omega}\sigma_0$
and $f$ is unimodal, we have $\inf_{t_0\le t\le t_1^-} f(t)\ge 10\sqrt{\omega}\sigma_0$.
Let $\cB$ denote the event described in \eqref{close}.
Then, whenever $\cB$ holds, we have $X_t\ge 0$ for $t_0\le t\le \min\{T_1,t_1^-\}$.
Since $X_{T_1}\le -Z-1<0$, this implies $T_1> t_1^-$.

Recall from Lemma~\ref{l1} that (crudely) $Z\le \sigma_0$ whp. Suppose  $Z\le \sigma_0$,
$\cB$ holds, and $T_1>t_1^+$.
Then from $\cB$ and the bound on $f(t_1^+)$ we have $X_{t_1^+}\le -9\sqrt{\omega}\sigma_0<-Z$,
contradicting $T_1> t_1^+$. It follows that $T_1\le t_1^+$ holds whp.

At this point we have shown that $|T_1-t_1|\le t_0$ holds whp, which gives $|T_1-T_0-t_1|\le 2t_0$.
Since $\omega$ may tend to infinity arbitrarily slowly, this already shows that
$T_1-T_0=t_1+\Op(\sigma_0/\eps)=\rho n+\Op(\sqrt{\eps^{-1}n})$. To go further, we
next analyze the distribution
of $X_{t_1}$ more precisely.

From Lemma~\ref{l1} and the bound $T_1>t_1^-$ whp just proved,
whp we have $C_{t_1^-}=Z\le \sigma_0/\omega$.
Noting that $t_0=t_1-t_1^-=o(n)$, it follows that $\E C_{t_1}=o(n)$.
Lemma~\ref{Wc} and Lemma~\ref{l2} thus give $|X_t-f(t)|=\op(n)$,
uniformly in $t\le t_1$.
Since $X_t-f(t)$ is deterministically bounded by $n$, it
follows that $\E|X_t-f(t)|$ and hence $\E |X_t+C_{t+1}-f(t)|$ are $o(n)$,
uniformly in $t\le t_1$. Let $u_t=n-t-f(t)=ne^{-pt}$. Since $U_t'=n-t-(X_t+C_{t+1})$, we have
shown that
\begin{equation}\label{diff}
 \E \sum_{t=0}^{t_1-1} |U_t'-u_t| = o(t_1n) = o(\eps n^2).
\end{equation}

Note that
\begin{multline}
 p(1-p)\sum_{t=0}^{t_1-1} (1-p)^{-2t}u_t
 \sim p\sum_{t=0}^{t_1-1}e^{2pt}ne^{-pt} \\
 \sim n^2p \int_0^{\rho} e^{\la x}{\rm d}x
 = n\la\la^{-1}(e^{\la\rho}-1) = n\rho/(1-\rho), \label{vsum}
\end{multline}
using $e^{-\la\rho}=1-\rho$ in the last step.
\begin{lemma}\label{norm}
The distribution of $S_{t_1}$ is asymptotically normal with mean 0
and variance $n\rho/(1-\rho)$.
\end{lemma}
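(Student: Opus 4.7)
The plan is to apply a martingale central limit theorem to the stopped martingale $(S_t)_{t\le t_1}$, normalized by $\sigma_S:=\sqrt{n\rho/(1-\rho)}$. Viewing $I_{n,i}:=I_i/\sigma_S$, $1\le i\le t_1$, as a triangular array of martingale differences, two ingredients suffice: the scaled quadratic variation $V_{t_1}/\sigma_S^2$ must converge in probability to $1$, and a Lindeberg (or stronger maximum-jump) condition must hold.

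For the quadratic variation, \eqref{var} gives $V_{t_1}=\sum_{i=1}^{t_1}(1-p)^{-2i}p(1-p)U_{i-1}'$. I would replace $U_{i-1}'$ by its deterministic surrogate $u_{i-1}=ne^{-p(i-1)}$, forming $\tilde V_{t_1}=\sum_{i=1}^{t_1}(1-p)^{-2i}p(1-p)u_{i-1}$. Since $pt_1=\la\rho=O(1)$, the weights $(1-p)^{-2i}$ are uniformly bounded on $i\le t_1$, so \eqref{diff} together with Markov's inequality yields $V_{t_1}-\tilde V_{t_1}=\op(\eps n)$. The computation already carried out at \eqref{vsum} identifies $\tilde V_{t_1}\sim n\rho/(1-\rho)=\sigma_S^2$, and since $\rho\ge (\la-1)/\la\ge\eps/M$ forces $\sigma_S^2=\Omega(\eps n)$, we obtain $V_{t_1}/\sigma_S^2\to 1$ in probability.

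For negligibility of jumps I would verify the stronger bound $\max_{i\le t_1}|I_i|^2=\op(\sigma_S^2)$. Conditionally on $\cF_{i-1}$, $\eta_i$ is binomial with mean at most $np=\la=O(1)$, so a Chernoff bound combined with a union bound over $i\le n$ yields $\max_i\eta_i=O(\log n)$ whp; therefore $\max|\Delta_i|\le\max_i\eta_i+np=O(\log n)$ whp, and the uniform bound $(1-p)^{-i}=O(1)$ on $i\le t_1$ yields $\max|I_i|=O(\log n)$ whp. Because $\eps^3 n\to\infty$ implies $\eps n\gg n^{2/3}\gg(\log n)^2$, this is much stronger than the Lindeberg condition. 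Feeding these two inputs into the standard martingale CLT for triangular arrays (e.g.\ Hall--Heyde, Theorem 3.2) delivers $S_{t_1}/\sigma_S\dto N(0,1)$, which is the statement of Lemma~\ref{norm}. The only step requiring real work is the quadratic-variation replacement in the first paragraph; the Lindeberg check is automatic from the logarithmic bound on the jumps, and the rest is bookkeeping on top of \eqref{diff} and \eqref{vsum}.
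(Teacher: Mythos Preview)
Your proposal is correct and follows essentially the same route as the paper: both apply a martingale central limit theorem, use \eqref{var}, \eqref{diff} and \eqref{vsum} to show that the sum of conditional variances divided by $n\rho/(1-\rho)$ tends to $1$ in probability, and then dispose of the Lindeberg-type condition. The only genuine difference is in this last step. The paper handles it by subdividing each increment $\Delta_i$ into $U_{i-1}'$ independent $\CBi(1,p)$ summands (padded with zeros), producing a refined martingale whose differences are deterministically bounded by $(1-p)^{-n}=O(1)$; the Lindeberg condition is then trivial since the normalizer $\sigma_S\to\infty$. You instead bound $\max_i\eta_i$ by $O(\log n)$ via Chernoff and a union bound, which is equally valid and only marginally less slick. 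One small caveat: the hypothesis set you verify (convergence of the \emph{conditional} quadratic variation together with $\max_i|I_{n,i}|\to_p 0$) matches Hall--Heyde's Corollary~3.1 rather than their Theorem~3.2, which is phrased in terms of the realized sum $\sum_i I_{n,i}^2$; either way the conclusion follows, since your Chernoff bound in fact controls the conditional tail $\E[I_{n,i}^2\mathbf{1}_{|I_{n,i}|>\delta}\mid\cF_{i-1}]$ directly.
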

\begin{proof}
Recall that $(S_t)$ is a martingale with $S_0=0$, and that the conditional
distribution of the $i$th difference $(1-p)^{-i}\Delta_i$ is $(1-p)^{-i}$ times
a centered binomial $\CBi(U_{i-1}',p)$, and has conditional variance given by \eqref{var}.
The result follows easily by a standard martingale central limit theorem 
such Brown~\cite[Theorem 2]{Brown}.
Note that here the differences are not uniformly bounded. However,
we can write $\Delta_i$ as the sum of a random number $U_{i-1}'$ of $\CBi(1,p)$ random variables,
plus $n-U_{i-1}'$ zero variables. We can take the new variables multiplied by $(1-p)^{-i}$
as the differences
of a martingale $(S_j')$ with the property that $S_t=S_{nt}'$.
In this way we obtain a martingale with the same (random) final value in which 
the differences are bounded by $(1-p)^{-n}=O(1)$.
The (random) sum of the (old or new) conditional variances
is exactly $s=\sum_{t=0}^{t_1-1} (1-p)^{-2t}U_{t-1}'p(1-p)$.
By \eqref{diff} and \eqref{vsum} the ratio of $s$ to $n\rho/(1-\rho)$
converges to 1 in probability, as required for the martingale central limit theorem.
\end{proof}

To relate the distribution of $T_1$ to that of $X_{t_1}$ (or $\tX_{t_1}$)
we use the fact that $(X_t)$ has slope approximately
$-a$ near $t_1$; a similar argument was given by Martin-L\"of~\cite{ML86}.

\begin{lemma}\label{lstraight}
We have
\[
 \sup_{|t-t_1|\le t_0} |\tX_t-\tX_{t_1}-a(t_1-t)| = \op(\sigma_0).
\]
\end{lemma}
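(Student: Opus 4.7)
The plan is to decompose $\tX_t-\tX_{t_1}$ into a deterministic piece $x_t-x_{t_1}$ and a stochastic piece $(1-p)^t S_t-(1-p)^{t_1}S_{t_1}$ via \eqref{td2}, and to show that the former is close to $a(t_1-t)$ to within $o(\sigma_0)$, while the latter is uniformly $\op(\sigma_0)$ on $|t-t_1|\le t_0$. All the estimates will rely on $\omega^6=o(\eps^3 n)$, Doob's maximal inequality, and the bound $|x_t-f(t)|=O(1)$ already noted in the paper.

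For the deterministic piece, I would use the Taylor expansion of $f$ around $t_1$: since $f(t_1)=0$, $f'(t_1)=-a$ and $f''$ is uniformly $O(1/n)$, we get $f(t)=-a(t-t_1)+O((t-t_1)^2/n)$, and combining with $|x_t-f(t)|=O(1)$ this yields
\[
 x_t-x_{t_1}-a(t_1-t)=O\bb{t_0^2/n+1}.
\]
The main term is $t_0^2/n=\omega^2/\eps$, and its ratio to $\sigma_0=\sqrt{\eps n}$ is $\omega^2/\sqrt{\eps^3 n}$, which tends to $0$ since $\omega^6=o(\eps^3 n)$ and hence certainly $\omega^4=o(\eps^3 n)$.

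For the stochastic piece I would write
\[
 (1-p)^t S_t-(1-p)^{t_1}S_{t_1} = \bb{(1-p)^t-(1-p)^{t_1}} S_t + (1-p)^{t_1}(S_t-S_{t_1}).
\]
For the first summand, $|(1-p)^t-(1-p)^{t_1}|=O(pt_0)=O(t_0/n)$, while applying Lemma~\ref{lmax} to $(S_i)_{i\le t_1+t_0}$ gives $\sup_{i\le t_1+t_0}|S_i|=\Op(\sqrt{t_1+t_0})=\Op(\sigma_0)$ (since $t_1+t_0=O(\eps n)=O(\sigma_0^2)$). Thus this term is $\Op(pt_0\sigma_0)=\Op(\omega)=\op(\sigma_0)$, uniformly in $t$. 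For the second summand, $(1-p)^{t_1}=O(1)$, and Doob's inequality applied separately to the martingales $(S_{t_1+k}-S_{t_1})_{k\ge 0}$ and $(S_{t_1-k}-S_{t_1})_{k\ge 0}$ (or rather its reverse-time analogue, using that the conditional variances are still $O(1)$) gives
\[
 \sup_{|t-t_1|\le t_0}|S_t-S_{t_1}|=\Op(\sqrt{t_0}).
\]
Since $\sqrt{t_0}/\sigma_0=\sqrt{\omega}/(\eps^3 n)^{1/4}\to 0$ by our choice of $\omega$, this is also $\op(\sigma_0)$.

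The argument is essentially routine once the right splitting is chosen; the only point requiring care is keeping the uniformity in $t$, which is why Doob's maximal inequality (already used in Lemma~\ref{lmax}) is the natural tool rather than pointwise Chebyshev bounds. The only small obstacle is bookkeeping the various smallness conditions against the single assumption $\omega^6=o(\eps^3 n)$, but each of the ratios that needs to tend to $0$ (namely $\omega^2/\sqrt{\eps^3 n}$ for the deterministic error, $\omega$ relative to $\sigma_0$ for the first stochastic term, and $\sqrt{\omega}/(\eps^3 n)^{1/4}$ for the second) is easily controlled by this condition.
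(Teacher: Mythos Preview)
Your proof is correct and follows essentially the same route as the paper's: decompose via \eqref{td2}, Taylor-expand $f$ about $t_1$ for the deterministic part, split the stochastic part into a term of size $O(pt_0)\cdot\Op(\sqrt{t_1})$ plus $(1-p)^{O(1)}(S_t-S_{t_1})$, and control the latter by Doob's maximal inequality. The one imprecise step is your appeal to a ``reverse-time analogue'' of Doob for $(S_{t_1-k}-S_{t_1})_{k\ge 0}$, which is not a martingale in any natural filtration; the paper avoids this by simply applying Lemma~\ref{lmax} to the forward martingale $(S_t-S_{t_1^-})_{t=t_1^-}^{t_1^+}$, whose final variance is $O(t_0)=o(\sigma_0^2)$, and then the triangle inequality gives $\sup_{|t-t_1|\le t_0}|S_t-S_{t_1}|=\op(\sigma_0)$.
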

\begin{proof}
From \eqref{td2} we may write $\tX_t-\tX_{t_1}$ as
\[
 x_t-x_{t_1} +(1-p)^tS_t-(1-p)^{t_1}S_{t_1} = (f(t)-f(t_1)) + (1-p)^tS_t-(1-p)^{t_1}S_{t_1} +O(1).
\]
Recalling that $f'(t_1)=-a$ and $f''(t)=O(1/n)$ uniformly in $t$, the difference
between the first term and $a(t_1-t)$ is $O(|t-t_1|^2/n)=O(t_0^2/n) = o(\sigma_0)$.
For the rest, note that
\[
 |(1-p)^t-(1-p)^{t_1}|\le |1-(1-p)^{|t-t_1|}| \le p|t-t_1| \le pt_0.
\]
Since $S_{t_1}=\Op(\sqrt{t_1})$ and $pt_0\sqrt{t_1}=O(n^{-1}\omega\sigma_0\eps^{-1}\sqrt{\eps n})=o(\sigma_0)$,
it thus suffices to show that $\sup_{|t-t_1|\le t_0}|S_t-S_{t_1}|=\op(\sigma_0)$.
But this follows easily by applying Lemma~\ref{lmax} to the martingale
$(S_t-S_{t_1^-})_{t=t_1^-}^{t_1^+}$, which has final variance $O(t_0)=o(\sigma_0^2)$.
\end{proof}

\begin{proof}[Proof of Theorem~\ref{th_gen}.]
Recall from Lemma~\ref{l1} that $Z$, the number of components
explored by time $t_0$, satisfies $Z=\op(\sigma_0)$.
We have shown above that whp $T_1=\inf\{t:X_t=-Z-1\}$ lies between $t_1^-$ and $t_1^+$.
From \eqref{XXtclose}, $X_t$ is within $\op(\sigma_0)$ of $\tX_t$
at least until $T_1$.
It follows that at time $T_1$, we have $\tX_t=\op(\sigma_0)$.
Since $a=\Theta(\eps)$, Lemma~\ref{lstraight} thus gives
\begin{equation}\label{T1}
 T_1=t_1 +\tX_{t_1}/a +\op(\sigma_0/\eps).
\end{equation}
From Lemma~\ref{norm}, \eqref{td2} and the fact that $f(t_1)=0$, we have that $\tX_{t_1}$
is asymptotically normal with mean $0$ and variance 
\[
 (1-p)^{2t_1}n\rho/(1-\rho) \sim e^{-2\la\rho} n\rho/(1-\rho) = n\rho(1-\rho).
\]
Hence $\tX_{t_1}/a$ is asymptotically normal with mean 0 and variance
\[
 n\rho(1-\rho)/a^2 =\sigma^2.
\]
Since this variance is of order $\eps^{-1}n=\eps^{-2}\sigma_0^2$, the $\op(\sigma_0/\eps)$ error
term in \eqref{T1} is irrelevant, and $T_1$ is asymptotically normal
with mean $t_1=\rho n$ and variance $\sigma^2$. Finally, from Lemma~\ref{l1} we have $T_0=\op(\sigma_0/\eps)$.
It follows that $T_1-T_0$ is asymptotically normal with the parameters claimed in the theorem.

This shows the existence of a component with the claimed size. As shown by Nachmias and Peres~\cite{NP_giant},
it is easy to check that the rest of the graph corresponds to a subcritical random graph,
and whp will not contain a larger component.
\end{proof}

\begin{ack}
We are grateful to an anonymous referee for several suggestions improving the presentation
of the paper.
\end{ack}

\end{document}